\documentclass[letterpaper,final,10pt]{amsart}
\usepackage[utf8x]{inputenc}

\usepackage{amsmath}%
\usepackage{amsfonts}%
\usepackage{amssymb}%
\usepackage{graphicx}
\usepackage{functan}
\usepackage{mathrsfs}
\usepackage[obeyspaces]{url}
\usepackage{braket}
\usepackage[all,cmtip,curve,knot]{xy}
\usepackage{tikz}
\usetikzlibrary{arrows}
\tikzstyle{block}=[draw opacity=0.7,line width=1.4cm]
\usepackage{algorithm}
\usepackage{algorithmicx}
\usepackage{algpseudocode}
\usepackage{mathbbol}
\newtheorem{theorem}{Theorem}

\newtheorem{corollary}{Corollary}

\newtheorem{definition}{Definition}
\newtheorem{example}{Example}

\newtheorem{lemma}{Lemma}

\newtheorem{remark}{Remark}

\numberwithin{equation}{section}
\numberwithin{theorem}{section}
\numberwithin{lemma}{section}
\numberwithin{corollary}{section}
\numberwithin{definition}{section}
\numberwithin{example}{section}
\numberwithin{remark}{section}
\numberwithin{property}{section}
\numberwithin{proposition}{section}

\newcommand{\NS}[3][1]{#2_#1,\ldots,#2_#3}

\newcommand{\lift}[3][\pi]{\xymatrix{#2 \ar@{~>}[r]_{#1} & #3}}

\newcommand{\Ad}[2]{\mathrm{Ad}[#1](#2)}
\newcommand{\diag}[2][]{\mathrm{diag}_{#1}\left[ #2 \right]}
\newcommand{\N}[1]{\mathcal{N}(#1)}

\newcommand{\I}{\mathbb{1}}

\newcommand{\RR}{\mathbb{R}}
\newcommand{\CC}{\mathbb{C}}
\newcommand{\ZZ}{\mathbb{Z}}

\newcommand{\TT}[1][1]{{\mathbb{T}^{#1}}}

\newcommand{\disk}[1][2]{{\mathbb{D}^{#1}}}


\begin{document}
\title{Dynamical Deformation of Toroidal Matrix Varieties}
\author{Fredy Vides}
\address{School of Mathematics and Computer Science, Department of Applied Mathematics, 
Universidad Nacional Aut\'onoma de Honduras, Ciudad Universitaria, Tegucigalpa, Honduras.}

\email{fredy.vides@unah.edu.hn}

\keywords{
Matrix homotopy, matrix compression, normal matrix dilation, joint spectrum, pseudospectrum.}

\subjclass[2010]{47N40, 15A60, 15A24, 47A20 (primary) and 15A83 (secondary).} 

\date{\today}

\begin{abstract}
In this document we study the local connectivity of the sets whose elements are $m$-tuples of pairwise commuting normal matrix contractions. Given $\varepsilon>0$, we prove that there is $\delta>0$ such that for any two 
$m$-tuples of pairwise commuting normal matrix contractions $\mathbf{X}:=(X_1,\ldots,X_m)$ and $\tilde{\mathbf{X}}:=(\tilde{X}_1,\ldots,\tilde{X}_m)$ that are $\delta$-close with respect to some suitable distance $\eth$ in $(\mathbb{C}^{n\times n})^m$, we can find  a $m$-tuple of matrix paths (homotopies) connecting $\mathbf{X}$ to $\mathbf{\tilde{X}}$ relative to the intersection of some $\varepsilon,\eth$-neighborhood of $\mathbf{X}$ with the set of $m$-tuples of pairwise commuting normal matrix contractions. One of the key features of these matrix homotopies is that $\delta$ can be chosen independent of $n$.

Some connections with topology and numerical matrix analysis will be outlined as well.
\end{abstract}

\maketitle

\section{Introduction}
\label{intro}

In this document we study local deformation properties of what we call toroidal matrix varieties. Given a $m$-tuple of $n\times n$ matrices $\mathbf{X}\in (\CC^{n\times n})^m$ 
and a  suitable distance $\eth$ in $(\mathbb{C}^{n\times n})^m$ induced by the spectral norm (that will be defined below), let us write $N_\eth(\mathbf{X},r)$ to denote the set $\{\mathbf{Y}\in (\CC^{n\times n})^m | \eth(\mathbf{X},\mathbf{Y})\leq r\}$. We call $N_\eth(\mathbf{X},r)$ a $r,\eth$-neighborhood of $\mathbf{X}$. 

Given two matrices $X,Y\in (\CC^{n\times n})$, let us write $X\sim_h Y$ to indicate that there is a {\bf homotopy} between $X$ and $Y$, in other words, there is function $\gamma\in C([0,1],\CC^{n\times n})$ such that $\gamma(0)=X$ and $\gamma(1)=Y$ (where $C(X,Y)$ denotes the algebra of continuous functions on $X$ that take values in $Y$). Given $\varepsilon,\delta>0$, any two $m$-tuples of pairwise commuting normal matrix contractions $\mathbf{X}:=(X_1,\ldots,X_m)$ and $\tilde{\mathbf{X}}:=(\tilde{X}_1,\ldots,\tilde{X}_m)$ such that 
$\eth(\mathbf{X},\tilde{\mathbf{X}})\leq \varepsilon$, let us write $\mathbf{X}\sim_h \tilde{\mathbf{X}}$ to denote the homotopy 
induced by the homotopies of their components. We study the existence of homotopies $\mathbf{X}\sim_h \tilde{\mathbf{X}}$ relative to the intersection of some $\varepsilon,\eth$-neighborhood of $\mathbf{X}$ with the set of $m$-tuples of pairwise commuting normal matrix contractions.

We can think of each homotopy $X_j\sim_h \tilde{X}_j$ relative 
to $N_\eth(X_j,\varepsilon)$, as a noncommutative analogy of the family of curves ({\bf links}) connecting the point sets determined 
by the spectra $\sigma(X_j)$ and 
$\sigma(\tilde{X}_j)$ relative to the embedding of $\disk\times \TT[1]$ in $[-1,1]^2\times \TT$ (where the embedding is induced by the mapping $\disk\to B(0,1)\subseteq [-1,1]^2$). In a similar way we can interpret the induced homotopies 
$\mathbf{X}\sim_h \tilde{\mathbf{X}}$ as noncommutative analogies of the links connecting the joint spectra $\sigma(\mathbf{X})$ and 
$\sigma(\mathbf{\tilde{X}})$ relative to $[-1,1]^{2n}\times \TT$.

In this study we will use the $C^\ast$-algebraic technology developed in 
\cite{Rordam_Lin_Thm,Lin_Theorem,Vides_homotopies,Vides_matrix_words} to derive a connectivity technique that can be used to study and solve problems that appear in clustered matrix approximation (in the sense of \cite{Clustered_matrix_approximation}) and matrix dilation/compression problems (in the sense of \cite{Greenbaum_matrix_dilations, Holbrook_matrix_compressions} and \cite[\S9 and \S10]{Audenaert_Kittaneh_Open_Problems}).

In \cite{Vides_homotopies} we found some connections between the map $\Psi:=\mathrm{Ad}[W]$ described by L.\ref{Joint_spectral_variation_inequality_2} in \S\ref{notation} and the local homeomorphims that play a key role in the Kirby Torus Trick (introduced by R. Kirby in \cite{Torus_Trick_Kirby}) by moving sets of points in homeomorphic copies $\mathcal{T}^m\simeq \TT[m]$ of $\TT[m]$ {\bf \em just a little bit}, this analogy together with L.\ref{existence_of_almost_unit} provides a connection with {\bf \em topologically controlled linear algebra} (in the sense of \cite{CLA_Freedman}), which can be roughly described as the study of the relations between matrix sets and 
smooth manifolds that is performed by implementing techniques from geometric topology in the study of matrix approximation problems. The search for the previously mentioned analogies and connections was motivated by a question raised by M. H. Freedman regarding to the role played by the Kirby Torus Trick in linear algebra and matrix approximation theory.

In this document we build on the techniques developed in \cite{Vides_homotopies} and \cite{Vides_matrix_words} to study the analytic local connectivity properties of matrix representations of certain universal $C^\ast$-algebras. The deformation technology constructed for this purpose can be adapted using the techniques presented in \S\ref{main_results} to study the local behavior of approximate/numerical  solutions to 
matrix equations in the sense of \cite{applied_matrix_Homotopy_1,Chu_num_lin,Matrix_Poly_Dennis,Applied_Spectrum_Clustering_2,Applied_Spectrum_Clustering_3,
Applied_Dilated_Spectrum_Clustering_2}, that can be interpreted as local matrix homotopies that play the role of continuous analogies (in the sense of 
\cite{Chu_num_lin}) of the discrete/numerical procedures corresponding to spectral refinement algorithms (in the sense of \cite{Spectral_refinement_1,Spectral_refinement_2}) that can be used to refine the outputs of approximate simultaneous block diagonalization/transformation algorithms (in the sense of \cite{sim_block_diag,ASD_matrices}). 

The applications of the previously described refinement methods include singular saddle point problems that arise in computational fluid dynamics,
mixed finite element approximation of elliptic partial differential equations, optimization, optimal control, weighted least-squares problems, electronic networks, computer graphics and others
in scientific computing and engineering (in the sense of \cite{Applied_Spectrum_Clustering_2}), together with mathematical modelling of photonic crystals (in the sense of \cite{Applied_Spectrum_Clustering_3}) in physics. The results presented in \S\ref{main_results} can also be used to compute and to study the approximate and exact simultaneous block diagonalization of matrix $m$-tuples in the sense of \cite{sim_block_diag,ASD_matrices}, problems of this type appear in biomathematics, image processing and applied spectral graph theory.

Building on some of the ideas developed by M. A. Rieffel in \cite{Finite_groups_Rieffel,Vector_bundles_Rieffel}, by H. Lin in 
\cite{Lin_Theorem} and by P. Friis and M. R{\"o}rdam in \cite{Rordam_Lin_Thm}, we proved L.\ref{existence_of_almost_unit}, L.\ref{Existence_of_1D_PSRA} and L.\ref{Existence_of_mD_PSRA}, these results together with L.\ref{Joint_spectral_variation_inequality_2} provide us with the matrix approximation technology that we use to prove the main results.  The main results T.\ref{main_result}, C.\ref{main_corollary} and T.\ref{main_unitary_result} are presented in \S\ref{main_results} and some future directions are outlined in \S\ref{hints}.

\section{Preliminaries and Notation}
\label{notation}

Let $(X,d)$ be a metric space. Then we say that $\tilde{X}_\delta\subset X$ is a $\delta$-dense if for all $x\in X$ there exists $\tilde{x}\in \tilde{X}$ such that $d(x,\tilde{x})\leq\delta$. 
Given two compact subsets $X,Y$ of the complex plane, we will write $d_H(X,Y)$ to denote the Hausdorff distance between $X$ and $Y$ that is defined by the expression 
$d_H(X,Y):=\max\{sup_{x\in X} \inf_{y\in Y}|x-y|,sup_{y\in Y} \inf_{x\in X}|y-x|\}$. Let us denote by $\eth$ the function 
$\eth:(\CC^{n\times n})^m\times (\CC^{n\times n})^m\to \RR^+_0,(\mathbf{S},\mathbf{T})\mapsto \max_j \|S_j-T_j\|$. From here on $\|\cdot\|$ denotes the operator/spectral norm. Given two topological objects, we use the expression $X \simeq Y$ to denote a homeomorphism between them (a continuous function between the topological objects that has a continuous inverse function). We will write $\disk[m]$ and $\TT[m]$ to denote the $m$-dimensional closed unit disk and the $m$-dimensional torus respectively. We will write $B(x_0,r)$ to denote the closed ball 
$\{x\in \CC\: | \: |x-x_0|\leq r\}$.

We will write $M_n$ to denote the set $\mathbb{C}^{n\times n}$ of $n\times n$ complex matrices, the symbols $\mathbf{1}_n$ and $\mathbf{0}_n$ will be used to denote the identity matrix and the zero matrix in $M_n$ respectively. Given a matrix $A\in M_n$, we write $A^\ast$ to denote the conjugate transpose $\bar{A}^\top$ of $A$. A matrix $X\in M_n$ is said to be normal if $XX^\ast=X^\ast X$, a matrix $H\in M_n$ is said to be hermitian if $H^\ast=H$ and a 
matrix $U\in M_n$ such that $U^\ast U=UU^\ast=\mathbf{1}_n$ is called unitary. 

A hermitian matrix $P\in M_n$ such that $P=P^2$ is called a projector. Given two projectors $P$ and $Q$, if $PQ=QP=\mathbf{0}_n$ we say that $P$ and $Q$ are orthogonal. By an orthogonal partition of unity in $M_n$, we mean a finite set of orthogonal projectors $\{P_j\}$ in $M_n$ such that 
$\sum_j P_j=\mathbf{1}_n$. We will omit the explicit reference to $M_n$ when it is clear from the context. In this document by a matrix contraction we mean a matrix $X$ in $M_n$ for some $n\in \ZZ^+$ such that $\|X\|\leq 1$.

Given any two matrices $X,Y\in M_n$ we will write $[X,Y]$ and $\mathrm{Ad}[X](Y)$ to denote the operations 
$[X,Y]:=XY-YX$ and $\mathrm{Ad}[X](Y):=XYX^*$.

Given a matrix $A\in M_n$, we write $\sigma(X)$ to denote the set $\{\lambda\in \CC \: | \: \det(A-\lambda\mathbf{1}_n)=0\}$ of eigenvalues of $A$, the set $\sigma(A)$ is called the spectrum of $A$. Given a compact set $\mathbb{X}\subset \mathbb{C}$ and a subset $S\subseteq M_n$, let us dote by $S(\mathbb{X})$ the set 
$S(\mathbb{X}):=\{X\in S|\sigma(S)\subseteq \mathbb{X}\}$, for instance, the expression $\N{n}(\disk)$ is used to denote the set 
normal contractions in $M_n$.

\begin{definition}
Given $\varepsilon\geq 0$ and a matrix $X\in M_n$, we write $\sigma_\varepsilon(X)$ to denote the $\varepsilon$-Pseudospectrum of $X$ which is the set defined by the following relations.
\begin{eqnarray*}
\sigma_\varepsilon(X)&:=&\{\tilde{\lambda}\in \CC \: | \: \tilde{\lambda}\in \sigma(X+E), \mathrm{for \: some} \: E\in M_n \: \mathrm{with } \: \|E\|\leq \varepsilon\}\\
&=&\{\tilde{\lambda}\in \CC \: | \: \|X\mathbf{v}-\tilde{\lambda}\mathbf{v}\|\leq \varepsilon, \mathrm{for \: some} \: \mathbf{v}\in \CC^n \: \mathrm{with } \: \|\mathbf{v}\|=1\}
\end{eqnarray*}
\end{definition}

\begin{definition}[Semialgebraic Matrix Varieties]
\label{matrix_variety}
Given $J\in \ZZ^+$, a system 
of $J$ polynomials $\NS{p}{J}\in \Pi_{\braket{N}}=\mathbb{C}\Braket{\NS{x}{N}}$ in $N$ NC-variables $\NS{x}{N}\in \Pi_{\braket{N}}$ 
and a real number $\varepsilon\geq 0$, a particular matrix representation of the 
noncommutative semialgebraic set $\mathcal{Z}_{\varepsilon,n}(\NS{p}{J})$ 
described by 
\begin{equation}
 \mathcal{Z}_{\varepsilon,n}(\NS{p}{J}):=\Set{\NS{X}{N}\in M_{n} | \|p_j(\NS{X}{N})\|\leq \varepsilon, 1\leq j\leq J},
\end{equation}
will be called a {\bf $\varepsilon,n$-semialgebraic matrix variety} ($\varepsilon,n$-SMV), if $\varepsilon=0$ we can refer to the set as 
a {\bf matrix variety}. 
\end{definition}

\begin{example} Given any integer $n\geq 1$, let us set $\mathbf{N}:=\diag{n,n-1,\ldots,1}$, we will have that the 
set $Z_\mathbf{N}:=\{X\in M_n|[\mathbf{N},X]=0\}$ is a matrix variety. If for some $\delta>0$, we set now 
$Z_{\mathbf{N},\delta}:=\{X\in M_n|\|[\mathbf{N},X]\|\leq \delta\}$, the set $Z_{\mathbf{N},\delta}$ is 
a matrix semialgebraic variety.
\end{example}

\begin{example} The subset of $M_n^3$ described by,
\begin{equation}
\mathcal{Z}^3:=\left\{
(X_1,X_2,X_3)\in M_n^3
\left|
 \begin{array}{l}
  A_1X_1-X_2B_1=C_1,\\
  A_2X_3-X_2B_2=C_2
 \end{array}
\right. 
 \right\},
\label{matrix_variety_equation_1}
\end{equation}
where $A_1,A_2,B_1,B_2,C_1$ and $C_3$ are some fixed but arbitrary matrix contractions in $M_n$, is an algebraic matrix variety.
\end{example}

\begin{example} Given $\varepsilon>0$, the subset of $M_n^3$ described by,
\begin{equation}
\mathcal{Z}^3:=\left\{
(X_1,X_2,X_3)\in M_n^3
\left|
 \begin{array}{l}
  \|A_1X_1-X_2B_1-C_1\|\leq \varepsilon,\\
  \|A_2X_3-X_2B_2-C_2\|\leq \varepsilon,\\
  X_j-X_j^*=0, 1\leq j\leq m
 \end{array}
\right. 
 \right\},
\label{matrix_variety_equation_2}
\end{equation}
where $A_1,A_2,B_1,B_2,C_1$ and $C_3$ are some fixed but arbitrary matrix contractions in $M_n$, is a semialgebraic matrix variety.
\end{example}

\begin{remark}
Matrix sets of the form \ref{matrix_variety_equation_1} and \ref{matrix_variety_equation_2} provide a connection between matrix equations on words (in the sense of \cite{Johnson_matrix_words_spectrum,Vides_matrix_words}) and algebraic/semialgebraic matrix varieties. From this perspective, the computation/refinement  of matrix words corresponding to the numerical solution of matrix equations on words can be interpreted as a discrete analogy of local matrix homotopies like the ones constructed in the proofs of the results in \S\ref{main_results}.

\end{remark}

\begin{definition}[Curved and Flat matrix paths]
 Given any three hermitian matrices $-\mathbf{1}_n\leq H_1,H_2,H_3 \leq\mathbf{1}_n$ and a function $f\in C^1([-1,1])$, and given any four normal contractions 
 $D_1,\ldots,D_4$ in $M_n$, with $D_2=\Ad{e^{\pi i H_1}}{D_1}$, $D_3=f(H_2)$ and $D_4=f(H_3)$. Let us consider the paths $\breve{Z}(t):=\Ad{e^{\pi i t H_1}}{D_1}$ 
 and $\bar{V}(t):=f(tH_3+(1-t)H_2)$. We will say that $\breve{Z}$ is a {\em \bf curved} interpolating path for $D_1,D_2$ and we will say that the path 
 $\bar{V}$ is a {\em \bf flat} interpolating path for $D_3,D_4$.
\end{definition}

\begin{definition}[$\circledast$ operation]
 Given two matrix paths $X,Y\in C([0,1],M_n)$ we write ${X\circledast Y}$ to denote the concatenation of $X$ and $Y$, which is 
 the matrix path defined in terms of $X$ and $Y$ by the expression,
 \[
  {X\circledast Y}_s:=
  \left\{
  \begin{array}{l}
   X_{2s},\:\: 0\leq s\leq \frac{1}{2},\\
   Y_{2s-1},\:\: \frac{1}{2}\leq s\leq 1.
  \end{array}
  \right.  
 \]
\end{definition}

\begin{definition}[Local matrix deformations $x \rightsquigarrow_{\varepsilon,S} y$]
 Given two matrices $x,y\in M_n$ we write $x\rightsquigarrow y$ if there is a matrix path $z\in C([0,1],M_n)$ such that 
 $z_0=x$ and $z_1=y$, if there is a $\varepsilon$-local matrix homotopy 
 $X\in C([0,1],M_n)$ between $x$ and $y$ relative to the set $S$, we will write $x\rightsquigarrow_{\varepsilon,S} y$ and will omit the explicit reference to $S$ when it is clear from the context.
\end{definition}

It is often convenient to have $N$-tuples (or $2N$-tuples) of matrices with real spectra. For this purpose we use the following construction, 
initiated by McIntosh and Pryde. If $X=(\NS{X}{N})$ is a $N$-tuple of $n$ by $n$ matrices then we can always decompose $X_j$ in the form 
$X_j=X_{1j}+iX_{2j}$ where the $X_{kj}$ all have real spectra. We write 
$\pi(X):=(X_{11},\ldots,X_{1N},X_{21},\ldots,X_{2N})$ and call $\pi(X)$ a partition of $X$. If the $X_{kj}$ all commute we say that 
$\pi(X)$ is a commuting partition, and if the $X_{kj}$ are simultaneously triangularizable $\pi(X)$ is a triangularizable partition. If 
the $X_{kj}$ are all semisimple (diagonalizable) then $\pi(X)$ is called a semisimple partition.

We say that $N$ normal matrices $\NS{X}{N}\in M_n$ are {\em simultaneously diagonalizable} if there is a unitary matrix 
$Q\in M_n$ such that 
$Q^* X_jQ$ is diagonal for each $j=1,\ldots,N$. In this case, for $1\leq k\leq n$, let 
$\Lambda^{(k)}(X_j):=(Q^*X_jQ)_{kk}$ the $(k,k)$ element of $Q^*X_jQ$, and set 
$\Lambda^{(k)}(\NS{X}{N}):=(\Lambda^{(k)}(X_1),\ldots,\Lambda^{(k)}(X_N))$ in $\CC^N$. The set
\[
 \Lambda(\NS{X}{N}):=\{\Lambda^{(k)}(\NS{X}{N})\}_{1\leq k\leq N}
\]
is called the joint spectrum of $\NS{X}{N}$. We will write $\Lambda(X_j)$ to denote the diagonal matrix representation of the $j$-component of $\Lambda(\NS{X}{N})$, in other words 
we will have that
\[
 \Lambda(X_j)=\diag{\Lambda^{(1)}(X_j),\ldots,\Lambda^{(n)}(X_j)}.
\]

The following result (\cite[L.4.1]{Vides_homotopies}) was proved in \cite{Vides_homotopies}.

\begin{lemma}\label{Joint_spectral_variation_inequality_2}
 Given $\varepsilon>0$ there is $\delta=\frac{1}{K_m}\varepsilon> 0$ such that, for any two $N$-tuples of pairwise commuting normal 
 matrices $\mathbf{x}=(\NS{x}{N})$ and $\mathbf{y}:=(\NS{y}{N})$ 
 such that $\eth(\mathbf{x},\mathbf{y})\leq \delta$, there is a unitary matrix $W$ such that $[\Ad{W}{x_j},y_j]=0$ and 
 $\max\{\|\Ad{W}{x_j}-y_j\|,\|\Ad{W}{x_j}-x_j\|\}\leq \varepsilon$, for each $1\leq j\leq N$.
\end{lemma}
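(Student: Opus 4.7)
The plan is to reduce the assertion to a joint-spectral matching problem for $N$-tuples of pairwise commuting normal matrices. Since $\mathbf{x}=(x_1,\ldots,x_N)$ and $\mathbf{y}=(y_1,\ldots,y_N)$ each consist of pairwise commuting normals, there exist unitaries $U,V\in M_n$ simultaneously diagonalizing them, so that $U^*x_jU=D^x_j$ and $V^*y_jV=D^y_j$ are diagonal for every $j$; the diagonal entries of these matrices encode, in some order, the components of the joint spectra $\Lambda(\mathbf{x})$ and $\Lambda(\mathbf{y})$. This recasts the question as one of aligning two discrete configurations in $\CC^N$ coordinate by coordinate.

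The next step is to invoke a joint-spectral variation bound of Bhatia--Elsner--Pryde type for tuples of pairwise commuting normals: there is a constant $K_N>0$ depending only on $N$ (and in particular independent of $n$) such that whenever $\eth(\mathbf{x},\mathbf{y})\leq\delta$, there exists a bijection (with multiplicity) $\pi$ on $\{1,\ldots,n\}$ with
\[
\max_{j,k}\bigl|\Lambda^{(k)}(x_j)-\Lambda^{(\pi(k))}(y_j)\bigr|\leq K_N\delta.
\]
If $P$ denotes the permutation matrix implementing $\pi^{-1}$, then the diagonal matrices $PD^x_jP^*$ and $D^y_j$ are entrywise within $K_N\delta$ for every $j$.

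I would then set $W:=VPU^*$. Since $PD^x_jP^*$ is still diagonal, both $\Ad{W}{x_j}=V(PD^x_jP^*)V^*$ and $y_j=VD^y_jV^*$ are diagonalized simultaneously by $V$, so $[\Ad{W}{x_j},y_j]=0$. Moreover,
\[
\|\Ad{W}{x_j}-y_j\|=\|PD^x_jP^*-D^y_j\|\leq K_N\delta,
\]
and by the triangle inequality
\[
\|\Ad{W}{x_j}-x_j\|\leq \|\Ad{W}{x_j}-y_j\|+\|y_j-x_j\|\leq(K_N+1)\delta.
\]
Choosing $K_m:=K_N+1$ and $\delta:=\varepsilon/K_m$ then delivers both required estimates at once.

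The main obstacle is establishing the joint-spectral matching inequality with a constant independent of $n$. For $N=1$ this reduces to the classical Weyl-type perturbation bound for normal matrices, which is sharp and well known. For $N\geq 2$ the $n$-independence is substantially more delicate: an optimal one-dimensional pairing for $x_1$ against $y_1$ need not remain a good pairing for the remaining components, so one cannot simply match eigenvalues coordinate by coordinate and then union the results. The argument, as carried out in \cite{Vides_homotopies}, proceeds by an inductive refinement of simultaneous eigenspace decompositions combined with a transportation-style rearrangement that exploits the full commutation structure of both $\mathbf{x}$ and $\mathbf{y}$ to produce a single permutation $\pi$ that is simultaneously good in every coordinate.
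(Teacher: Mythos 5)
The paper does not prove this lemma: it is stated as an imported result, with the reader sent to \cite[L.4.1]{Vides_homotopies} for the argument, so there is nothing in this manuscript for your proposal to be compared against line by line. On its own merits your reduction is clean: simultaneously diagonalize $\mathbf{x}$ by $U$ and $\mathbf{y}$ by $V$, pass to a permutation $P$ matching the two joint spectra $\Lambda(\mathbf{x})$ and $\Lambda(\mathbf{y})$ coordinatewise, and set $W:=VPU^*$; then $\mathrm{Ad}[W](x_j)$ and $y_j$ are both diagonal in the common eigenbasis furnished by $V$, hence commute, and the two norm bounds follow from the matching estimate together with a triangle inequality.

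The genuine gap is that the entire analytic content of the lemma --- the existence of a single, $n$-independent matching constant $K_N$ that works simultaneously for all $N$ coordinates --- is invoked rather than established, and you dispose of it by appealing in prose to an unspecified ``inductive refinement'' in \cite{Vides_homotopies}. Two cautions are worth making explicit. First, the Pryde-type joint spectral variation inequalities referenced in this paper (see \cite{Pryde_Inequalities}) are, in the form most often quoted, Hausdorff-distance estimates on joint spectra; upgrading a Hausdorff bound to an optimal bijective, multiplicity-respecting matching of the $n$ joint eigenvalues is a nontrivial further step, since a pairing that is good for $(x_1,y_1)$ can be badly misaligned for $(x_2,y_2)$ unless it is built from simultaneous spectral projections rather than coordinate by coordinate. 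Second, even at $N=1$, matching for normal (as opposed to Hermitian) matrices already costs a universal constant close to $3$ rather than $1$, and the $n$-independence for general $N$ requires genuine work. As written, the proposal is a correct reduction of the lemma to a harder assertion, not a self-contained proof of it.
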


\begin{remark}
The constant $K_m$ in the statement of L.\ref{Joint_spectral_variation_inequality_2} depends only $m$.
\end{remark}

\section{Local Deformation of Normal Contractions}
\label{main_results}

Applying functional calculus on commutative $C^\ast$-algebras we have that the joint spectrum $\sigma(X,Y)$ of any two commuting hermitian matrix contractions $X,Y\in M_n$ is contained in $[-1,1]^2$, moreover if $\|X+iY\|\leq 1$ we will have that 
$\sigma(X,Y)\subseteq B(0,1)\simeq \disk$.

\begin{lemma}
\label{existence_of_almost_unit}
 Given $r\in \ZZ^+$ and $\nu>0$, there is $\delta:=\hat{\delta}(\nu,r)>0$ such that for any unitary $W$ and any normal contraction $D$ in $M_n$ for $n\geq 2$, if $D=\sum_{j=1}^r \alpha_jP_j$ is diagonal for $2\leq r\in \ZZ$ and $\NS{\alpha}{r}\in \disk$, the 
 set $\{P_j\}$ is an orthogonal partition of unity, and $\alpha_j\neq \alpha_k$ whenever $k\neq j$, then there 
 is a unitary matrix $Z\in M_n$ such that $[Z,D]=0$ and $\|\I_n-WZ\|\leq\nu$ whenever $\|WDW^*-D\|\leq \delta$.
\end{lemma}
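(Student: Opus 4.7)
The plan is to turn the hypothesis $\|WDW^{\ast}-D\|\le\delta$ into closeness of the spectral projectors $\tilde{P}_{j}:=WP_{j}W^{\ast}$ and $P_{j}$, to intertwine them by a near-identity unitary via the polar decomposition, and then to take $Z:=W^{\ast}U^{\ast}$.

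First I would exploit that $\alpha_{1},\dots,\alpha_{r}$ are distinct. With $\mu:=\min_{j\ne k}|\alpha_{j}-\alpha_{k}|$, Lagrange interpolation yields
\[
P_{j}\;=\;\prod_{k\ne j}\frac{D-\alpha_{k}\mathbf{1}_{n}}{\alpha_{j}-\alpha_{k}},
\]
and $\tilde P_{j}$ satisfies the identical formula with $WDW^{\ast}$ in place of $D$. Telescoping the product using $\|D\|,\|WDW^{\ast}\|\le 1$ and $|\alpha_{k}|\le 1$ produces an estimate of the form
\[
\max_{j}\|\tilde P_{j}-P_{j}\|\;\le\;C_{1}(r,\mu)\,\|WDW^{\ast}-D\|,
\]
with, e.g., $C_{1}(r,\mu)=(r-1)\,2^{r-2}/\mu^{r-1}$. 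Hence $\eta:=\max_{j}\|\tilde P_{j}-P_{j}\|$ can be made arbitrarily small by shrinking $\delta$, with all bounds independent of $n$.

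Next I would introduce the averaged element $T:=\sum_{j=1}^{r}P_{j}\tilde P_{j}$. Orthogonality of both $\{P_{j}\}$ and $\{\tilde P_{j}\}$ gives the intertwining $T\tilde P_{j}=P_{j}\tilde P_{j}=P_{j}T$, so $T^{\ast}T$ commutes with every $\tilde P_{j}$. Since $\|T-\mathbf{1}_{n}\|=\|\sum_{j}P_{j}(\tilde P_{j}-P_{j})\|\le r\eta$, for $\eta$ below a threshold depending only on $r$ the element $T$ is invertible, and its polar decomposition $T=U|T|$ yields a unitary $U$. Because $|T|^{-1}$ still commutes with each $\tilde P_{j}$, the intertwining lifts to $U\tilde P_{j}U^{\ast}=P_{j}$ for every $j$, whence $V:=UW$ commutes with every $P_{j}$ and therefore with $D$.

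Setting $Z:=W^{\ast}U^{\ast}=V^{\ast}$ then yields $[Z,D]=0$ and $WZ=U^{\ast}$, so $\|\mathbf{1}_{n}-WZ\|=\|U-\mathbf{1}_{n}\|$. Writing $U-\mathbf{1}_{n}=(T-|T|)|T|^{-1}$, combined with the positive-operator bound $\||T|-\mathbf{1}_{n}\|\le\|T^{\ast}T-\mathbf{1}_{n}\|\le r\eta(2+r\eta)$ and $\||T|^{-1}\|\le 2$ once $3r\eta\le 1/2$, gives $\|U-\mathbf{1}_{n}\|\le C_{2}(r)\,\eta$ for an explicit $C_{2}(r)=O(r)$. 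Choosing $\hat\delta(\nu,r)$ so that $C_{2}(r)\,C_{1}(r,\mu)\,\hat\delta\le\nu$ closes the argument. The main obstacle is that $C_{1}$ scales like $\mu^{-(r-1)}$, so the Lagrange route inevitably loses control when the eigenvalues of $D$ coalesce, a gap-dependence that has to be absorbed into the constant $\hat\delta$; by contrast, the dimension-independence is automatic, since every quantity above is built from polynomial functional calculus and the polar decomposition, both of which yield operator-norm estimates uniform in $n$.
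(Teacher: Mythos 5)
Your argument is sound and arrives at the same conclusion as the paper, but by a genuinely different route. The paper writes $P_k=\ell_k(D)$ for suitable continuous functions $\ell_k$, uses an unquantified ``by continuity'' step to make $\|WP_kW^\ast-P_k\|$ small, then applies the standard $C^\ast$-perturbation lemma to produce, one index at a time, a unitary $W_j$ with $\|\mathbf{1}_n-W_j\|\le\sqrt{2}\,\|WP_jW^\ast-P_j\|$ and $W_j^\ast P_jW_j=WP_jW^\ast$, finally assembling $\tilde{W}:=\sum_j W_jWP_j$ and setting $Z:=\tilde{W}^\ast$. You instead make the functional-calculus step explicit through Lagrange interpolation, form the single averaging element $T=\sum_jP_j\tilde{P}_j$, and extract the intertwining unitary $U$ from the polar decomposition of $T$, a one-shot construction that conjugates all the $\tilde{P}_j$ to the $P_j$ simultaneously. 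Both are classical moves in $C^\ast$-perturbation theory; yours has the advantage of exposing every constant.

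That explicitness also surfaces a real issue which the paper's ``by continuity'' phrasing conceals: your constant $C_1(r,\mu)$ (and, equally, the implicit Lipschitz modulus of the paper's $\ell_k$) depends on the spectral gap $\mu:=\min_{j\ne k}|\alpha_j-\alpha_k|$, so the resulting $\hat\delta$ cannot in fact depend only on $(\nu,r)$ as the lemma claims. Some gap hypothesis is unavoidable: with $D=\mathrm{diag}(0,\epsilon)$ and $W$ the rotation by $\pi/2$ one has $\|WDW^\ast-D\|=\epsilon$, yet $\|\mathbf{1}_2-WZ\|\ge\sqrt{2}$ for every diagonal unitary $Z$. This is harmless in the paper's application, where $D$ is a PSRA whose distinct eigenvalues sit on a grid of spacing comparable to $\delta$, so the gap is controlled; but strictly the lemma should carry a gap parameter. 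You identify the dependence honestly; the paper's proof shares it but does not flag it.
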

\begin{proof}
Let $\mu:=\|WDW^*-D\|=\|WD-DW\|$. Then there are $r$ continuous functions $\ell_1,\ldots,\ell_r\in C(\disk)$ such that 
$P_k:=\ell_k(D)$. By continuity one can find $\delta(\nu,r)>0$ such that, $\|WP_kW^\ast-P_k\|=\|W\ell_k(D)W^\ast-\ell_k(D)\|= \|\ell_k(WDW^\ast)-\ell_k(D)\|\leq \nu/(\sqrt{2}r)<1/(\sqrt{2}r)$, whenever $\mu\leq \delta$. Since $\nu<1$, by perturbation theory of $C^*$-algebras we will have that there is a unitary $W_j$ such that $\|\mathbf{1}-W_j\|\leq \sqrt{2}\|WP_kW^\ast-P_k\|\leq \nu/r$ and $W_j^\ast P_j W_j=WP_jW^\ast$, and this implies that $W_jWP_j=P_jW_j W$ for each $j$. Since $\{P_j\}$ is an orthogonal partition of unity, we will have that $\tilde{W}:=\sum_j W_jWP_j$ is a 
unitary matrix which satisfies the commutation relation $[\tilde{W},D]=0$ together with the normed inequalities.
\begin{eqnarray}
\|W-\tilde{W}\|&=&\|W\sum_jP_j -\sum_j W_jWP_j\|\\
               &=&\|\sum_j((\mathbf{1}_n-W_j)WP_j)\|\\
               &\leq&\sum_j \|\mathbf{1}_n-W_j\|\\
               &\leq&\sum_j \frac{1}{r}\nu=r(\frac{1}{r}\nu)=\nu
\end{eqnarray}
Let us set $Z:=\tilde{W}^\ast$ then $\|\mathbf{1}_n-WZ\|=\|\tilde{W}-W\|\leq \nu$. This completes the proof.
\end{proof}

\begin{definition}[Pseudospectral retractive approximant] Given $\delta>0$ and any matrix $X\in M_n$, we say that the matrix $\tilde{X}\in M_n$ is a $\delta$-Pseudospectral retractive approximant ({\bf $\delta$-PSRA}) of $X$ if $\|X-\tilde{X}\|\leq \delta$ and 
$\sigma(\tilde{X})$ is $\delta$-dense in $\sigma(X)$.
\end{definition}

\begin{lemma}
\label{Existence_of_1D_PSRA}
Given $\delta>0$ and any hermitian matrix $X\in M_n$ such that $\|X\|\leq 1$, there is a hermitian $\delta$-PSRA $\tilde{X}_\delta$ of $X$ 
such that $[X,\tilde{X}]=\mathbf{0}_n$. Moreover, there are $N_\delta$ distinct points $\{x_1,\ldots,x_{N_\delta}\}$ and an orthogonal partition of unity $\{P_1,\ldots,P_{N_\delta}\}$ such that $\tilde{X}_\delta=\sum_j x_j P_j$.
\end{lemma}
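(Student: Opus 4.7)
The plan is to build $\tilde{X}_\delta$ by applying functional calculus to $X$ with a piecewise constant ``rounding'' map onto a $\delta$-net of $[-1,1]$. Since $X$ is hermitian with $\|X\|\leq 1$, the spectrum satisfies $\sigma(X)\subseteq[-1,1]$, and $X$ admits a spectral decomposition $X=\sum_{\lambda\in\sigma(X)}\lambda\, E_\lambda$, where the $E_\lambda$ are pairwise orthogonal spectral projectors with $\sum_\lambda E_\lambda=\mathbf{1}_n$. I would pick a finite $\delta$-dense set $\{y_1,\ldots,y_K\}\subseteq[-1,1]$ (for instance, take $K:=\lceil 1/\delta\rceil+1$ equispaced points) and define $f\colon[-1,1]\to\{y_1,\ldots,y_K\}$ by sending $t$ to its nearest $y_k$, breaking ties in favor of the smallest index; by construction $|f(t)-t|\leq\delta$ for every $t\in[-1,1]$.

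Next, apply Borel functional calculus to set
\[
\tilde{X}_\delta := f(X) = \sum_{\lambda\in\sigma(X)} f(\lambda)\, E_\lambda.
\]
Let $\{x_1,\ldots,x_{N_\delta}\}:=f(\sigma(X))$ be the distinct values actually attained, and for each $j$ collect the corresponding spectral projectors into $P_j:=\sum_{\lambda\in f^{-1}(x_j)\cap\sigma(X)} E_\lambda$. Each $P_j$ is hermitian, $P_jP_k=\mathbf{0}_n$ for $j\neq k$, and $\sum_j P_j=\sum_{\lambda\in\sigma(X)}E_\lambda=\mathbf{1}_n$, so $\{P_j\}$ is an orthogonal partition of unity and $\tilde{X}_\delta=\sum_j x_j P_j$ with the $x_j$ pairwise distinct, as required.

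To finish, I would verify the three defining properties in turn. The commutation identity $[X,\tilde{X}_\delta]=\mathbf{0}_n$ is immediate since $\tilde{X}_\delta=f(X)$ is a Borel function of $X$. The norm bound
\[
\|X-\tilde{X}_\delta\| \;=\; \max_{\lambda\in\sigma(X)}|\lambda-f(\lambda)| \;\leq\; \delta
\]
follows from the supremum-norm translation rule for functional calculus on hermitian matrices. Finally, $\sigma(\tilde{X}_\delta)=f(\sigma(X))$, and for every $\lambda\in\sigma(X)$ the point $f(\lambda)\in\sigma(\tilde{X}_\delta)$ satisfies $|\lambda-f(\lambda)|\leq\delta$, yielding $\delta$-density of $\sigma(\tilde{X}_\delta)$ in $\sigma(X)$. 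The only step that requires any care is checking that collapsing spectral projectors along the level sets of $f$ preserves orthogonality and totality, but this is automatic from the orthogonality of $\{E_\lambda\}$; so I do not anticipate a genuine obstacle, and the entire argument is essentially a spectral-theorem exercise.
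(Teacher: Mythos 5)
Your argument is correct and takes essentially the same route as the paper: round the spectrum of $X$ onto a finite $\delta$-net of $[-1,1]$ via functional calculus, then collect the spectral projectors of $X$ according to the rounded value to obtain the orthogonal partition of unity $\{P_j\}$ and the distinct representatives $\{x_j\}$. Your nearest-neighbor map $f$ applied directly to the eigenprojectors is a tidier packaging of what the paper does through its explicit ``representation grid'' $\hat R_\delta(X)$, ``support grid'' $S_\delta(X)$, and approximate characteristic functions $\hat\chi_{(c,d]}$, but the underlying decomposition and verification are identical.
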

\begin{proof}
Let us suppose that $n\geq |\sigma(X)|\geq 2$, as the proof is clear for scalar multiples of $\mathbf{1}_n$. Since $\sigma(X)\subseteq [-1,1]$, we can assume for simplicity that $M_\delta:=1+(\delta)^{-1} \in \mathbb{Z}^+$. Then the finite set 
$\hat{R}_\delta(X):=\{x_k:=-1+2(k-1)\delta\: | \: 1\leq k\leq M_\delta\}$ is $\delta$-dense in $[-1,1]$ with $|R_\delta(X)|=M_\delta$. Let us set $S_\delta(X):=\{\check{x}_k:=-1+2(k-2)\delta\: | \: 1\leq k \leq N_{\delta}+1\}$ and 
$P_k:=\hat{\chi}_{(\check{x}_k,\check{x}_{k+1}]}(X)$ for each $1\leq k\leq M_\delta$, where $\hat{\chi}_{(c,d]}$ denotes a continuous representation/approximation of the characteristic function $\chi_{(c,d]}$ of $(c,d]$ on some set $S_{c,d}$ such that 
$(c,d]\subseteq S_{c,d}\subset \mathbb{R}$. Then there is $R_\delta(X):=\{x_j\}\subseteq \hat{R}_\delta(X)$ that is $\delta$-dense in $\sigma(X)$ with $x_j\in (\check{x}_{k(j)},\check{x}_{k(j)+1}]$ for each $x_j\in R_\delta(X)$ and some $\check{x}_{k(j)},\check{x}_{k(j)+1}\in S_\delta(X)$, and there is a set $P_\delta(X):=\{P_1,\ldots,P_{N_\delta}\}\subseteq M_n\backslash \{\mathbf{0}_n\}$ that is an orthogonal partition of unity such that $[P_j,X]=0$ and for each $1\leq k\leq N_\delta$ there is  $x_{j(k)}\in R_\delta(X)$ such that $\|XP_k-x_{j(k)}P_k\|\leq \delta$ with $x_{k(j)}\neq x_{k(l)}$ whenever $j\neq l$. By the previous facts and spectral variation of normal matrices (in the sense of 
\cite{Bhatia_mat_book,Pryde_Inequalities}) we will have that if we set $\tilde{X}_\delta:=\sum_k x_{j(k)}P_k$ with $x_{j(k)}\in R_\delta(X)$ for each $k$, then $[X,\tilde{X}_\delta]=\mathbf{0}_n$ and 
$d_H(X,\tilde{X}_\delta)\leq \|X-\tilde{X}_\delta\|\leq\max_k \|XP_k-x_{j(k)}P_k\|\leq \delta$. If necessary we can renumber the elements of $R_\delta(X)$ according to the elements of $P_\delta(X)$. This completes the proof.
\end{proof}

\begin{remark}
We will refer to the finite sets $R_\delta(X)$ and $S_\delta(X)$ as {\em \bf representation} and {\em \bf support} grids respectively. The finite set $P_\delta(X)$ will be called a {\bf $\delta$-projective} decomposition for $X$.
\end{remark}

\begin{lemma}
\label{Existence_of_mD_PSRA}
Given $\delta>0$, $m\geq 1$ and any $m$-tuple of pairwise commuting hermitian matrix contractions $\mathbf{X}=(X_1,\ldots,X_m)\in M_n^m$, there is a $m$-tuple of pairwise commuting hermitian matrix contractions $\tilde{\mathbf{X}}_\delta=(\tilde{X}_{\delta,1},\ldots,\tilde{X}_{\delta,m})\in M_n^m$ such that $[X_j,\tilde{X}_{\delta,k}]=\mathbf{0}_n$ and $\tilde{X}_{\delta,j}$ is a $\delta$-PSRA of $X_j$ for each $1\leq j,k\leq m$. Moreover, there are $N_\delta$ distinct points $\{x_{k,1},\ldots,x_{k,N_\delta}\}$ and an orthogonal partition of unity $\{P_1,\ldots,P_{N_\delta}\}$ such that $\tilde{X}_{\delta,k}=\sum_j x_{k,j} P_j$ for each $k$.
\end{lemma}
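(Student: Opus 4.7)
The plan is to lift the one-dimensional construction of L.\ref{Existence_of_1D_PSRA} to the joint setting via the continuous functional calculus on the commutative $C^\ast$-subalgebra generated by $X_1,\ldots,X_m$. The joint spectrum $\sigma(\mathbf{X})$ is a finite subset of $[-1,1]^m$, and the simultaneous spectral theorem allows me to replace the scalar indicator functions used in L.\ref{Existence_of_1D_PSRA} by indicators of cells of a common product grid, so that one orthogonal partition of unity serves all $m$ coordinates at once.

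First I would transfer the one-dimensional support grid of L.\ref{Existence_of_1D_PSRA} to all $m$ axes: assuming $M_\delta:=1+\delta^{-1}\in\ZZ^+$, I use the common node set $S_\delta=\{\check{x}_k:=-1+2(k-2)\delta\,:\,1\leq k\leq M_\delta+1\}$ and form the product partition of $[-1,1]^m$ by the half-open boxes $C_\alpha=(\check{x}_{\alpha_1},\check{x}_{\alpha_1+1}]\times\cdots\times(\check{x}_{\alpha_m},\check{x}_{\alpha_m+1}]$, where $\alpha=(\alpha_1,\ldots,\alpha_m)$ ranges over multi-indices with $1\leq\alpha_j\leq M_\delta$. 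For each $\alpha$ I set $P_\alpha:=\prod_{j=1}^m \hat{\chi}_{(\check{x}_{\alpha_j},\check{x}_{\alpha_j+1}]}(X_j)$, using the same continuous approximations as in L.\ref{Existence_of_1D_PSRA}. Pairwise commutativity of the $X_j$'s makes the factors mutually commuting projectors, and disjointness of cells forces $P_\alpha P_\beta=\mathbf{0}_n$ whenever $\alpha\neq\beta$, since at least one coordinate factor vanishes.

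Second, I keep only the nonzero $P_\alpha$'s, relabel them $P_1,\ldots,P_{N_\delta}$, and note that $\sum_{j=1}^{N_\delta}P_j=\mathbf{1}_n$ because $\sum_\alpha P_\alpha$ equals the joint spectral projection onto $[-1,1]^m$. A projector $P_j$ associated to cell $C_{\alpha(j)}$ is nonzero precisely when $\sigma(\mathbf{X})\cap C_{\alpha(j)}\neq\emptyset$, so I can select a representative joint eigenvalue $\mathbf{x}_j=(x_{1,j},\ldots,x_{m,j})\in\sigma(\mathbf{X})\cap C_{\alpha(j)}$; disjointness of the cells guarantees pairwise distinctness of these $N_\delta$ $m$-tuples. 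Defining $\tilde{X}_{\delta,k}:=\sum_{j=1}^{N_\delta}x_{k,j}P_j$ for $1\leq k\leq m$, the four required conclusions follow: $[X_k,\tilde{X}_{\delta,l}]=\mathbf{0}_n$ because each $P_j$ lies in the commutative algebra generated by $X_1,\ldots,X_m$; $[\tilde{X}_{\delta,k},\tilde{X}_{\delta,l}]=\mathbf{0}_n$ because the $\tilde{X}_{\delta,k}$'s share the partition $\{P_j\}$; hermiticity and contractivity of each $\tilde{X}_{\delta,k}$ follow from joint eigenvalues lying in $[-1,1]^m$; and the estimate $\|X_k-\tilde{X}_{\delta,k}\|\leq\max_j\|X_kP_j-x_{k,j}P_j\|\leq\delta$, together with $\delta$-density of $\sigma(\tilde{X}_{\delta,k})=\{x_{k,1},\ldots,x_{k,N_\delta}\}$ in $\sigma(X_k)$, both follow by the coordinatewise application of the one-dimensional bound established in L.\ref{Existence_of_1D_PSRA}.

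The main obstacle is ensuring that the single orthogonal partition of unity $\{P_j\}$ is simultaneously fine enough for every coordinate, which is what forces the product-grid construction rather than $m$ independent one-dimensional decompositions. Because the cells are pairwise disjoint with bounded $k$-th coordinate spread for each $k$, the joint functional calculus delivers genuinely orthogonal spectral projectors that achieve the $\delta$-PSRA property in every coordinate at once. No further smallness on $\delta$ beyond the one-dimensional hypothesis of L.\ref{Existence_of_1D_PSRA} is required, and since the cell count $M_\delta^m$ depends only on $\delta$ and $m$, the construction and its quantitative output are independent of $n$.
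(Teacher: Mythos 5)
Your overall approach coincides with the paper's: both build one common orthogonal partition of unity $\{P_j\}$ from products $P_{1,j_1}\cdots P_{m,j_m}$ of the one-dimensional $\delta$-projective decompositions supplied by L.\ref{Existence_of_1D_PSRA}, and both then set $\tilde{X}_{\delta,k}:=\sum_j x_{k,j}P_j$. The commutation relations, hermiticity, contractivity and the $\delta$-approximation/$\delta$-density estimates come out essentially the same way in both.

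The gap is in the final clause of the lemma. You take $x_{k,j}$ to be the $k$-th coordinate of a joint eigenvalue chosen from the cell $C_{\alpha(j)}$, and you argue only that the $m$-tuples $\mathbf{x}_1,\ldots,\mathbf{x}_{N_\delta}$ are pairwise distinct. The lemma claims more: for \emph{each fixed} $k$, the scalars $\{x_{k,1},\ldots,x_{k,N_\delta}\}$ are $N_\delta$ distinct real numbers. Joint eigenvalues sitting in different cells can perfectly well share a coordinate --- already for $m=2$ with $\sigma(\mathbf{X})=\{(0,0),(0,1)\}$ and $\delta$ small, both nonempty cells return $x_{1,j}=0$ --- so your labels will in general repeat. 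This distinctness is not decorative: it is exactly the hypothesis $\alpha_j\neq\alpha_k$ needed to invoke L.\ref{existence_of_almost_unit} inside the proof of T.\ref{main_result}. The paper's device is to replace cell-local representatives by points drawn from a single equispaced grid $\{2(k-1)\nu-1\}_{k=1}^{N_\delta}$ with $\nu:=1/(N_\delta-1)\leq\delta$, so that one can assign a distinct grid point to each projector while shifting each label by only $O(\delta)$, and since $N_\delta$ depends only on $\delta$ and $m$ this keeps everything independent of $n$. Your proof can be repaired the same way --- after selecting joint eigenvalues, nudge each $x_{k,j}$ to a distinct grid point within $O(\delta)$ --- but as written the ``$N_\delta$ distinct points'' clause is not established.
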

\begin{proof}
Since the joint spectrum $\sigma(\mathbf{X})$ of $\mathbf{X}$ is a subset of $[-1,1]^m$ we can apply a similar procedure to the one implemented in the proof of L.\ref{Existence_of_1D_PSRA} to find for each $X_j$ a representation grid $\hat{R}_{\delta}(X_j)$ together with a support grid $S_{\delta}(X_j)$ and an associated 
$\delta$-projective decomposition $P_\delta(X_j)$ such that $|P_\delta(X_j)|=N_{\delta,j}:=|\hat{R}_{\delta}(X_j)|$, we will have that 
the set $P_\delta(\mathbf{X}):=\{P_1,\ldots,P_{N_\delta}\}=\{P_{1,j_1}P_{2,j_2}\cdots P_{m,j_m}\:|\: P_{k,j_k}\in P_\delta(X_k), \: 1\leq j_k\leq N_{\delta,k}, \: 
1\leq k \leq m\}$ is an orthogonal partition of unity, by setting $(1+\delta^{-1})^m\geq N_{\delta}:=|P_\delta(\mathbf{X})|$ and $\nu:=1/(N_\delta-1)$, will have that $N_\delta\geq N_{\delta,k}$ for each $k$, that $\nu\leq \delta$ and that there is a set 
$R_{\delta}(X_j)\subseteq \{x_{j,k}=2(k-1)\nu-1\:|\: 1\leq k\leq N_\delta\}$ of distinct points that is $\delta$-dense in $\sigma(X_j)$ for each $j$ and such that for each $X_k$ and each $P_j\in P_\delta(\hat{\mathbf{X}})$, there is $x_{k,j}\in R_\delta(X_j)$ such that $\|X_kP_j-x_{k,j}P_j\|\leq \delta$. Moreover,  
$[X_j,P_k]=\mathbf{0}_n$ for each $1\leq j\leq m$ and each $1\leq k \leq N_\delta$. By a similar argument to the one implemented in the proof of L.\ref{Existence_of_1D_PSRA} it can be seen that the matrix $\tilde{X}_{\delta,k}:=\sum_{j}x_{k,j}P_j$ is a 
commuting hermitian $\delta$-PSRA of $X_j$ for each $j$. This completes the proof.
\end{proof}

\begin{definition}[Uniform piecewise analytic local connectivity in $M_n^m$]
We will say that a matrix variety $\mathbb{Z}^m \subseteq M_n^m$ is uniformly locally piecewise analytically connected ({\bf ULPAC}) if given $\varepsilon>0$, there is $\delta>0$ such that for any two $m$-tuples $\mathbf{X}$ and $\mathbf{Y}$ in $\mathbb{Z}^m$ such that 
$\eth(\mathbf{X},\mathbf{Y})\leq \delta$ ($\mathbf{X}$ and $\mathbf{Y}$ are $(\delta_\varepsilon,\eth)$-close), we have that $\mathbf{X}\sim_h \mathbf{Y}$ relative to $N_\eth(X,\varepsilon)\cap \mathbb{Z}^m$.
\end{definition}

\begin{remark}
It is important to remark that $\varepsilon$ and $\delta$ must not depend on $n$.
\end{remark}

Let us define the toroidal matrix varieties whose local connectivity will be studied in this document. 

\begin{definition}[$\mathbb{I}^m(n)$]
The matrix variety $\mathbb{I}^m(n)$ defined by
\[
\mathbb{I}^m(n):=
\left\{
(X_1,\ldots,X_m)\in M_n^m \left|
\begin{array}{l}
[X_j,X_k]=\mathbf{0}_n\\
X_j-X_j^\ast=\mathbf{0}_n\\
\|X_j\|\leq 1
\end{array}
, 1\leq j,k \leq m\right.
\right\}
\]
will be called the $m$ {\em \bf matrix cube}.
\end{definition}

\begin{definition}[$\mathbb{D}^m(n)$]
The matrix variety $\mathbb{D}^m(n)$ defined by
\[
\mathbb{D}^m(n):=
\left\{
(Z_1,\ldots,Z_m)\in M_n^m \left|
\begin{array}{l}
[Z_j,Z_k]=[Z_j,Z_j^\ast]=\mathbf{0}_n\\
\|Z_j\|\leq 1
\end{array}
, 1\leq j,k \leq m\right.
\right\}
\]
will be called the $m$ {\em \bf matrix disk}.
\end{definition}

\begin{definition}[$\mathbb{T}^m(n)$]
The matrix variety $\mathbb{T}^m(n)$ defined by
\[
\mathbb{T}^m(n):=
\left\{
(U_1,\ldots,U_m)\in M_n^m \left|
\begin{array}{l}
[U_j,U_k]=\mathbf{0}_n\\
U_jU_j^\ast=U_j^\ast U_j=\mathbf{1}_n
\end{array}
, 1\leq j,k \leq m\right.
\right\}
\]
will be called the $m$ {\em \bf matrix torus}.
\end{definition}

\begin{remark}
It is important to notice that the components of two $m$-tuples $\mathbf{X}=(X_1,\ldots,X_m)$ and $\mathbf{Y}=(Y_1,\ldots,Y_m)$ in $\mathbb{I}^m(n)$, $\mathbb{D}^m(n)$ or $\mathbb{T}^m(n)$ need not to satisfy the commutation relations $[X_j,Y_k]=\mathbf{0}_n$ for each $j,k$ in general.
\end{remark}

\begin{theorem}
\label{main_result}
The matrix variety $\mathbb{I}^m(n)$ is uniformly locally piecewise analytically connected.
\end{theorem}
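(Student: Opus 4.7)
The plan is to realize the homotopy as a concatenation of three matrix paths, one curved sandwiched between two flat ones, and to rely on the three auxiliary lemmas L.\ref{Existence_of_mD_PSRA}, L.\ref{Joint_spectral_variation_inequality_2} and L.\ref{existence_of_almost_unit} in that order. Fix $\varepsilon>0$. First I would pick $\nu>0$ small enough that any hermitian $H$ with $\|e^{i\pi H}-\mathbf{1}_n\|\leq\nu$ satisfies $\|e^{i\pi tH}Xe^{-i\pi tH}-X\|\leq\varepsilon/3$ for every contraction $X$ and every $t\in[0,1]$ (this is the standard $2\|e^{i\pi tH}-\mathbf{1}_n\|\leq 2\pi t\|H\|$ estimate). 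Then fix $\delta_1\leq\varepsilon/3$, take $r=N_{\delta_1}$ from L.\ref{Existence_of_mD_PSRA} and $\delta_0:=\hat{\delta}(\nu,r)$ from L.\ref{existence_of_almost_unit}, and finally choose $\delta>0$ so that $\delta_1+\delta\leq K_m^{-1}\min\{\varepsilon/3,\delta_0\}$, which prepares L.\ref{Joint_spectral_variation_inequality_2} to give an approximation error $\leq\min\{\varepsilon/3,\delta_0\}$. Note that $K_m$, $N_{\delta_1}$, $\hat{\delta}(\nu,r)$ depend only on $\varepsilon$ and $m$, not on $n$, which is the decisive point.

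Given $\mathbf{X},\mathbf{Y}\in\mathbb{I}^m(n)$ with $\eth(\mathbf{X},\mathbf{Y})\leq\delta$, I would apply L.\ref{Existence_of_mD_PSRA} at scale $\delta_1$ to produce a pseudospectral retractive approximant $\tilde{\mathbf{X}}=(\tilde{X}_1,\ldots,\tilde{X}_m)$, $\tilde{X}_k=\sum_j x_{k,j}P_j$, with a common orthogonal partition of unity $\{P_j\}_{j=1}^{N_{\delta_1}}$ commuting with every $X_l$, and such that $\tilde{X}_1$ has $N_{\delta_1}$ distinct eigenvalues. Since $\tilde{X}_k$ commutes with $X_k$, the flat (first) path $\alpha_k(t):=(1-t)X_k+t\tilde{X}_k$ remains hermitian, contractive and the tuple $(\alpha_1(t),\ldots,\alpha_m(t))$ remains pairwise commuting; moreover $\|\alpha_k(t)-X_k\|\leq\varepsilon/3$. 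Next I would invoke L.\ref{Joint_spectral_variation_inequality_2} on the pair $(\tilde{\mathbf{X}},\mathbf{Y})$ to get a unitary $W$ with $\{W\tilde{X}_kW^\ast\}_k\cup\{Y_k\}_k$ jointly commuting and $\|W\tilde{X}_kW^\ast-\tilde{X}_k\|,\|W\tilde{X}_kW^\ast-Y_k\|\leq\min\{\varepsilon/3,\delta_0\}$.

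Here comes the delicate step: the path $t\mapsto e^{i\pi tH}\tilde{X}_k e^{-i\pi tH}$ obtained from a naive logarithm $W=e^{i\pi H}$ generally has $\|H\|$ of order one and so exits $N_\eth(\mathbf{X},\varepsilon)$. To fix this I apply L.\ref{existence_of_almost_unit} to the pair $(W,\tilde{X}_1)$: the spectral bound $\|W\tilde{X}_1W^\ast-\tilde{X}_1\|\leq\delta_0$ produces a unitary $Z$ with $[Z,\tilde{X}_1]=0$ and $\|\mathbf{1}_n-WZ\|\leq\nu$. Since $\tilde{X}_1$ has distinct eigenvalues with eigenprojectors $\{P_j\}$, $Z$ commutes with every $P_j$ and hence with every $\tilde{X}_k$. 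Writing $U:=WZ=e^{i\pi H}$ with $\|H\|\leq\nu/\pi$ (standard branch of the logarithm on the $\nu$-neighborhood of $\mathbf{1}_n$), the curved path
\begin{equation*}
\beta_k(t):=e^{i\pi tH}\tilde{X}_ke^{-i\pi tH},\qquad t\in[0,1],
\end{equation*}
satisfies $U\tilde{X}_kU^\ast=W(Z\tilde{X}_kZ^\ast)W^\ast=W\tilde{X}_kW^\ast$, so $\beta_k$ connects $\tilde{X}_k$ to $W\tilde{X}_kW^\ast$, lies entirely in $\mathbb{I}^m(n)$ (unitary conjugation preserves joint commutation and the hermitian contraction condition), and satisfies the uniform bound $\|\beta_k(t)-\tilde{X}_k\|\leq\varepsilon/3$ by the choice of $\nu$.

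Finally the second flat path $\gamma_k(s):=(1-s)W\tilde{X}_kW^\ast+sY_k$ lies in $\mathbb{I}^m(n)$ by the joint commutation obtained from L.\ref{Joint_spectral_variation_inequality_2}, stays within $\varepsilon/3$ of its endpoints, and reaches $Y_k$. Concatenating $\alpha\circledast\beta\circledast\gamma$ yields a homotopy $\mathbf{X}\sim_h\mathbf{Y}$ inside $N_\eth(\mathbf{X},\varepsilon)\cap\mathbb{I}^m(n)$ by the triangle inequality ($\varepsilon/3+\varepsilon/3+\varepsilon/3=\varepsilon$). The principal obstacle is the third paragraph: constructing the curved segment with amplitude $\|H\|$ of order $\varepsilon$, which forces the detour through the commuting unitary $Z$ delivered by L.\ref{existence_of_almost_unit}; a secondary point is to justify that L.\ref{Joint_spectral_variation_inequality_2} produces full joint commutation of the $2m$-tuple $\{W\tilde{X}_kW^\ast\}\cup\{Y_k\}$ rather than only the diagonal commutators, which follows from the common-diagonalization construction used in its proof.
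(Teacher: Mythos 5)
Your proposal is correct and rests on exactly the same three ingredients as the paper's proof --- L.\ref{Existence_of_mD_PSRA} to collapse the spectrum onto a finite grid of size independent of $n$, L.\ref{Joint_spectral_variation_inequality_2} to produce the intertwining unitary $W$, and L.\ref{existence_of_almost_unit} to replace $W$ by a unitary $WZ$ close to $\mathbf{1}_n$ that still conjugates $\tilde{\mathbf{X}}$ to $W\tilde{\mathbf{X}}W^\ast$, so that a logarithm of small norm exists and the curved segment stays inside $N_\eth(\mathbf{X},\varepsilon)$.

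The organizational difference is worth noting. The paper first applies L.\ref{Joint_spectral_variation_inequality_2} to $(\mathbf{X},\mathbf{Y})$ (with $\mathbf{Y}$ made diagonal WLOG), passes to the diagonal representatives $\Lambda(X_j)$, applies L.\ref{Existence_of_mD_PSRA} to $\Lambda(\mathbf{X})$, and conjugates the PSRA back by $W^\ast$; this produces a four-segment path $X_j\to\hat{X}_{\delta,j}\to\tilde{X}_{\delta,j}\to\Lambda(X_j)\to Y_j$. You instead apply L.\ref{Existence_of_mD_PSRA} to $\mathbf{X}$ directly, then apply L.\ref{Joint_spectral_variation_inequality_2} to $(\tilde{\mathbf{X}},\mathbf{Y})$, obtaining the three-segment concatenation $X_k\to\tilde{X}_k\to W\tilde{X}_kW^\ast\to Y_k$. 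Your version is cleaner: it invokes L.\ref{Joint_spectral_variation_inequality_2} only once, avoids the implicit identity $W^\ast\Lambda(X_j)W=X_j$ on which the paper leans, and makes the constant-tracking ($\varepsilon/3+\varepsilon/3+\varepsilon/3$) transparent. Two small points you should still make explicit: (i) L.\ref{existence_of_almost_unit} is stated only for $r\geq 2$, so the case $N_{\delta_1}=1$ (which the paper treats separately) should be observed to be trivial since then $\tilde{X}_k$ is scalar and the curved segment degenerates to a point; and (ii) L.\ref{existence_of_almost_unit} is stated for $D$ diagonal, which your $\tilde{X}_1$ need not be, but its proof depends only on the decomposition $D=\sum_j\alpha_jP_j$ over an orthogonal partition of unity, so the hypothesis is satisfied in substance --- a sentence to that effect would close the gap. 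You correctly flag the need for full joint commutation of $\{W\tilde{X}_kW^\ast\}\cup\{Y_k\}$ for the last flat segment; the paper handles this by diagonalizing $\mathbf{Y}$ and you handle it by appealing to the structure of the proof of L.\ref{Joint_spectral_variation_inequality_2} --- both resolutions are of comparable weight.
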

\begin{proof}
Without loss of generality we can assume that the components of $\mathbf{Y}$ are diagonal matrices. As a consequence of L.\ref{Joint_spectral_variation_inequality_2} we will have that given $\nu_\delta>0$, there is $\delta:=\frac{1}{K_m}\nu_\delta>0$ such that for any $m$-tuple $\mathbf{X}\in \mathbb{I}^m(n)$ that satisfies $\eth(\mathbf{X},\mathbf{Y})\leq \delta$, there exists a 
unitary matrix $W$ such that $[\Ad{W}{X_j},Y_j]=0$ and 
 $\max\{\|\Ad{W}{X_j}-Y_j\|,\|\Ad{W}{X_j}-X_j\|\}\leq \nu_\delta$, for each $1\leq j\leq N$. Since $\sigma(X_j)\subseteq [-1,1]\supseteq \sigma(Y_j)$ for each $j$. By joint spectral variation (in the sense of \cite{Pryde_Inequalities}) and by applying L.\ref{Joint_spectral_variation_inequality_2}  again, we will have that $W^\ast\Lambda(X_j)W=X_j$ and $\|W^\ast\Lambda(X_j)W-\Lambda(X_j)\|\leq K_m\delta$ for each $j$. Then by L.\ref{Existence_of_mD_PSRA} there is a $m$-tuple of pairwise commuting 
 hermitian $\delta$-PSRA $\tilde{\mathbf{X}}_{\delta}$ of $\mathbf{\Lambda(X)}:=(\Lambda(X_1),\ldots,\Lambda(X_m))$, together with $m$ representation grids $R_\delta(X_j):=\{x_{k,1},\ldots,x_{k,N_\delta}\}$ that are $\delta$-dense in $\sigma(X_j)=\sigma(\Lambda(X_j))$ for each $j$, and a projective decomposition $P_\delta(\mathbf{\Lambda(X)}):=\{P_1,\ldots,P_{N_\delta}\}$ such that $\tilde{X}_{\delta,j}=\sum_j x_{k,j}P_j$, $[X_{\delta,j},\Lambda(X_j)]=\mathbf{0}_n$ and $\|X_{j}-W^\ast\tilde{X}_{\delta,j}  W\|\leq \delta$. Let us 
 set $\hat{\mathbf{X}}_\delta:=(W^\ast X_{\delta,1}W,\ldots,W^\ast X_{\delta,m}W)$. We will have that.
\begin{eqnarray}
\|\hat{X}_{\delta,j}-\tilde{X}_{\delta,j}\|&\leq& \eth(\hat{\mathbf{X}}_\delta,\tilde{\mathbf{X}}_\delta)\\
&\leq& \eth(\hat{\mathbf{X}}_\delta,\mathbf{X})+\eth(\mathbf{X},\mathbf{\Lambda(X)})+\eth(\mathbf{\Lambda(X)},\tilde{\mathbf{X}}_\delta)\\
&\leq&(K_m+2)\delta
\label{proof_inequality_1}
\end{eqnarray} 
Let us set $N_{\varepsilon}:=|P_\delta(\mathbf{Y})|$. If $N_\varepsilon=1$, there are four flat hermitian paths $\bar{X}^{1,j}(t):=X_j+t(\hat{X}_{\delta,j}-X_j)$, $\bar{X}^{2,j}(t):=\hat{X}_{\delta,j}+t(\tilde{X}_{\delta,j}-\hat{X}_{\delta,j})$, $\bar{X}^{3,j}(t):=\tilde{X}_{\delta,j}+t(\Lambda({X}_{j})-\tilde{X}_{\delta,j})$ and $\bar{X}^{4,j}(t):=\Lambda(X_j)+t(Y_{j}-\Lambda(X_{j})$ such that the matrix path $\hat{X}^j:=[(\bar{X}^{1,j}\circledast \bar{X}^{2,j}) \circledast \bar{X}^{3,j}]\circledast \bar{X}^{4,j}\in C([0,1],I^m(n))$ solves the problem $X_j \rightsquigarrow_{\varepsilon_1} Y_j$ for each $j$ with 
$\varepsilon_1=(2K_m+3)\delta$ and this implies that $\mathbf{X}\sim_h \mathbf{Y}$ relative to $N_\eth(\mathbf{X},\varepsilon_1)\cap I^m(n)$. If $N_\varepsilon\geq 2$, then we can apply L\ref{existence_of_almost_unit} to find a unitary $\hat{W}:=ZW$ such that 
$\hat{W}^\ast \tilde{X}_{\delta,j} \hat{W}=W^\ast \tilde{X}_{\delta,j} W=\hat{X}_{\delta,j}$ and that $\|\mathbf{1}_n-\hat{W}\|\leq \varepsilon_2(N_\varepsilon,m,\delta)<2$, this implies that one can find a hermitian matrix contraction $\hat{K}$ such that $W=e^{i \pi\hat{K}}$. We have that the curved path $\breve{X}^{2,j}(t):=\Ad{e^{-i \pi(1-t)\hat{K}}}{\tilde{X}_{\delta,j}}$ solves the problem $\hat{X}_{\delta,j} \rightsquigarrow_{\varepsilon_2} \tilde{X}_{\delta,j}$ for each $j$ with $\varepsilon_2:=2\varepsilon_2(N_\varepsilon,m,\delta)$. Let us set $\hat{X}^{j}:=[(\bar{X}^{1,j}\circledast \breve{X}^{2,j}) \circledast \bar{X}^{3,j}]\circledast \bar{X}^{4,j}\in C([0,1],\mathbb{I}^m(n))$ with $\bar{X}^{1,j}$, $\bar{X}^{2,j}$ and $\bar{X}^{3,j}$ defined as before. Then $\hat{X}^{j}$ solves the problem $X_j \rightsquigarrow_{\varepsilon_3} Y_j$ for each 
$j$ with $\varepsilon_3:=\varepsilon_2+(K_m+1)\delta$, and this implies that $\mathbf{X}\sim_h\mathbf{Y}$ relative to $N_\eth(\mathbf{X},\varepsilon_3)\cap \mathbb{I}^m(n)$. Let us set $\varepsilon:=\max\{\varepsilon_1,\varepsilon_3\}$. This completes the proof.
\end{proof}

\begin{corollary}
\label{main_corollary}
The matrix variety $\mathbb{D}^m(n)$ is uniformly locally piecewise analytically connected.
\end{corollary}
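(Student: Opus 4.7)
The plan is to reduce the corollary to T.\ref{main_result} via the McIntosh--Pryde partition $\pi$ defined in \S\ref{notation}. Given $\mathbf{Z}=(Z_1,\ldots,Z_m)$ and $\tilde{\mathbf{Z}}=(\tilde{Z}_1,\ldots,\tilde{Z}_m)$ in $\mathbb{D}^m(n)$ with $\eth(\mathbf{Z},\tilde{\mathbf{Z}})\leq \delta$, I would first decompose $Z_j = X_{1j} + iX_{2j}$ and $\tilde{Z}_j = \tilde{X}_{1j} + i\tilde{X}_{2j}$ into their hermitian real and imaginary parts. The commutation relations $[Z_j,Z_k]=[Z_j,Z_k^\ast]=0$ force the $2m$-tuple $\pi(\mathbf{Z})=(X_{11},\ldots,X_{1m},X_{21},\ldots,X_{2m})$ to be pairwise commuting, while $\|X_{kj}\|\leq \|Z_j\|\leq 1$ yields $\pi(\mathbf{Z}),\pi(\tilde{\mathbf{Z}}) \in \mathbb{I}^{2m}(n)$ with $\eth(\pi(\mathbf{Z}),\pi(\tilde{\mathbf{Z}}))\leq \eth(\mathbf{Z},\tilde{\mathbf{Z}})$. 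Applying T.\ref{main_result} to $\pi(\mathbf{Z})$ and $\pi(\tilde{\mathbf{Z}})$ would then produce a piecewise-analytic homotopy $t\mapsto (X_{11}(t),\ldots,X_{1m}(t),X_{21}(t),\ldots,X_{2m}(t))$ inside $\mathbb{I}^{2m}(n)\cap N_\eth(\pi(\mathbf{Z}),\varepsilon)$, and the recombination $Z_j(t):=X_{1j}(t)+iX_{2j}(t)$ would automatically be normal with $(Z_1(t),\ldots,Z_m(t))$ pairwise commuting.

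The main obstacle is the norm bound $\|Z_j(t)\|\leq 1$, equivalently $\sigma(X_{1j}(t),X_{2j}(t))\subseteq B(0,1)\simeq \disk$ for every $j$ and $t$, since a generic element of $\mathbb{I}^{2m}(n)$ only has joint spectrum in $[-1,1]^{2m}$, so the naive recombination can reach norm $\sqrt{2}$. To resolve this I would refine the PSRA construction of L.\ref{Existence_of_mD_PSRA} used inside the proof of T.\ref{main_result}: rather than drawing representation values from the uniform grid in $[-1,1]^{2m}$, I would draw them so that for each fixed $j$ the pair $(x_{1j,k},x_{2j,k})$ lies in the intersection of the uniform grid in $[-1,1]^2$ with $\disk$. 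Since $\sigma(X_{1j},X_{2j})\subseteq \disk$ and $\sigma(\tilde{X}_{1j},\tilde{X}_{2j})\subseteq \disk$ at the endpoints by the discussion opening \S\ref{main_results}, this restricted grid is still $O(\delta)$-dense in each $\sigma(X_{1j},X_{2j})$, and the argument of L.\ref{Existence_of_mD_PSRA} then produces commuting hermitian PSRAs $\tilde{X}_{1,\delta,j},\tilde{X}_{2,\delta,j}$ satisfying $\sigma(\tilde{X}_{1,\delta,j},\tilde{X}_{2,\delta,j})\subseteq \disk$ for each $j$.

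With this disk-adapted refinement, each flat path in the proof of T.\ref{main_result} becomes a convex combination of commuting normal contractions whose joint spectra already lie in the convex set $\disk$, so the joint spectrum remains in $\disk$ at every $t$; the single curved path is a unitary conjugation and therefore trivially preserves the disk constraint. Consequently each $Z_j(t)$ is a normal contraction, the pairwise commutation of the $Z_j(t)$ is inherited from membership in $\mathbb{I}^{2m}(n)$, and the resulting $m$-tuple homotopy lies in $N_\eth(\mathbf{Z},\varepsilon)\cap \mathbb{D}^m(n)$ with $\varepsilon$ and $\delta$ independent of $n$, giving the uniform local piecewise-analytic connectivity of $\mathbb{D}^m(n)$.
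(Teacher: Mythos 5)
Your approach is fundamentally the same as the paper's: decompose each $Z_j$ into its McIntosh--Pryde hermitian parts $Z_j=X_{1j}+iX_{2j}$, observe that $\pi(\mathbf{Z}),\pi(\tilde{\mathbf{Z}})\in\mathbb{I}^{2m}(n)$ with $\eth(\pi(\mathbf{Z}),\pi(\tilde{\mathbf{Z}}))\leq\eth(\mathbf{Z},\tilde{\mathbf{Z}})$, and invoke T.\ref{main_result}. The paper's proof stops essentially there. Where you go further is in flagging and repairing the point the paper leaves implicit: a homotopy produced by T.\ref{main_result} lives in $\mathbb{I}^{2m}(n)$, whose constraint is $\sigma(X_{1j}(t),X_{2j}(t))\subseteq[-1,1]^2$, not the $\disk$ needed for $\|X_{1j}(t)+iX_{2j}(t)\|\leq 1$; in particular the PSRA intermediate points of L.\ref{Existence_of_mD_PSRA} are drawn from a cube grid and a priori only lie within $\delta$ of $\disk$, so a naive recombination $Z_j(t):=X_{1j}(t)+iX_{2j}(t)$ need not be a contraction. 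Your fix --- select representation values so each pair $(x_{1j,k},x_{2j,k})$ lies on the grid intersected with $\disk$, which is still $O(\delta)$-dense in $\sigma(X_{1j},X_{2j})\subseteq\disk$, then note that the flat segments preserve the disk constraint by convexity (equivalently, by the triangle inequality $\|(1-t)(X+iY)+t(X'+iY')\|\leq (1-t)\|X+iY\|+t\|X'+iY'\|$) and the single curved segment is a unitary conjugation, hence isometric --- is correct and closes this gap cleanly. So: same route as the paper, but with a necessary refinement of L.\ref{Existence_of_mD_PSRA} that the paper's one-line reduction tacitly assumes.
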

\begin{proof}
Given any two $m$-tuples $\mathbf{Z}$ and $\mathbf{S}$ in $\mathbb{D}^m(n)$, such that $\eth(\mathbf{Z},\mathbf{S})\leq r/2$ for some 
$r>0$, there are two semisimple commuting hermitian partitions $\pi(\mathbf{Z})$ and $\pi(\mathbf{S})$ of $\mathbf{Z}$ and $\mathbf{S}$ respectively, such that $\eth(\pi(\mathbf{Z}),\pi(\mathbf{S}))\leq r$. By the previously described fact the result follows by applying T.\ref{main_result} to the corresponding  semisimple commuting hermitian partitions of any two $(\delta_\varepsilon,\eth)$-close $m$-tuples in $\mathbb{D}^m(n)$.
\end{proof}

\begin{theorem}
\label{main_unitary_result}
The matrix variety $\mathbb{T}^m(n)$ is uniformly locally piecewise analytically connected.
\end{theorem}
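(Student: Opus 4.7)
The plan is to parallel the proof of T.\ref{main_result} at the preparatory level and to substitute every flat (affine) interpolation in its concluding concatenation by a curved interpolation of the form $t\mapsto\Ad{e^{i\pi tK}}{\cdot}$ (unitary conjugation) or $t\mapsto e^{i\pi H(t)}$ (spectral exponential), so that unitarity and pairwise commutation are preserved throughout the path. The auxiliary tools L.\ref{Joint_spectral_variation_inequality_2}, L.\ref{Existence_of_mD_PSRA} (applied to the hermitian partition $\pi(\mathbf{V})$), and L.\ref{existence_of_almost_unit} will be reused verbatim, and only the assembly of the final path changes.

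Given $\varepsilon>0$, I will choose $\delta>0$ small. For $\mathbf{U},\mathbf{V}\in\mathbb{T}^m(n)$ with $\eth(\mathbf{U},\mathbf{V})\leq\delta$, I first simultaneously diagonalize $\mathbf{V}$ (by a unitary conjugation that preserves $\eth$) to reduce to $V_j=\Lambda(V_j)$. L.\ref{Joint_spectral_variation_inequality_2} then supplies a unitary $W$ with $[\Ad{W}{U_j},V_j]=0$ and $\max\{\|\Ad{W}{U_j}-V_j\|,\|\Ad{W}{U_j}-U_j\|\}\leq K_m\delta$ for each $j$. Applying L.\ref{Existence_of_mD_PSRA} to $\pi(\mathbf{V})$ yields a common orthogonal partition of unity $\{P_k\}_{k=1}^{N_\delta}$ in the commutant of every $V_j$; choosing pairwise distinct $\alpha_k\in\disk$ and setting $D:=\sum_k\alpha_k P_k$ places every $V_j$ in the commutant of $D$. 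Invoking L.\ref{existence_of_almost_unit} on $(W,D)$ yields a unitary correction $Z$ with $[Z,D]=0$ and $\|\mathbf{1}_n-WZ\|\leq\nu$; setting $\hat W:=WZ$ gives a unitary close to $\mathbf{1}_n$, hence $\hat W=e^{i\pi\hat K}$ for a hermitian contraction $\hat K$ of norm $O(\nu)$.

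The path will be assembled from two curved segments for each $j$. The first, $\breve U^{1,j}(t):=\Ad{e^{i\pi t\hat K}}{U_j}$, steers $U_j$ to $\Ad{\hat W}{U_j}$ through a homotopy of commuting unitary $m$-tuples, since unitary conjugation preserves both unitarity and the pairwise commutation of the $U_k$. The second segment handles the leg from $\Ad{\hat W}{U_j}$ to $V_j$: granted the cross-commutation $[\Ad{\hat W}{U_j},V_k]=0$ for all $j,k$, the product $V_j\Ad{\hat W}{U_j}^*$ is a unitary close to $\mathbf{1}_n$ on each joint eigenspace of $\mathbf{V}$, so taking the principal logarithm block-by-block yields pairwise commuting hermitian matrices $G_j$ with $\|G_j\|<\pi$, and the path $\bar U^{2,j}(t):=e^{itG_j}\Ad{\hat W}{U_j}$ is a homotopy of commuting unitaries terminating at $V_j$. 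The total diameter in each factor is bounded by $\pi\|\hat K\|+\|G_j\|=O(\nu+K_m\delta)$, which can be forced $\leq\varepsilon$ by shrinking $\delta$, independently of $n$ because every constant comes from L.\ref{Joint_spectral_variation_inequality_2}, L.\ref{Existence_of_mD_PSRA}, or L.\ref{existence_of_almost_unit}.

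The hard part will be securing the full cross-commutation $[\Ad{\hat W}{U_j},V_k]=0$ required by the second segment. L.\ref{Joint_spectral_variation_inequality_2} delivers only the diagonal relation $[\Ad{W}{U_j},V_j]=0$, and the correction $Z$ commutes with $D$ but need not commute with any $U_j$, so $[\Ad{\hat W}{U_j},V_k]$ vanishes only up to $O(\delta)$. My plan is to iterate: apply L.\ref{Joint_spectral_variation_inequality_2} a second time to the already-close pair $(\Ad{\hat W}{\mathbf{U}},\mathbf{V})$ to obtain an auxiliary unitary $W'$ necessarily within $O(\delta)$ of $\mathbf{1}_n$ that enforces exact commutation on the joint eigenspaces of $\mathbf{V}$, splice it into the path as an additional curved segment $\Ad{e^{i\pi tK'}}{\cdot}$ with $W'=e^{i\pi K'}$, and then take the principal logarithm in the resulting exact commuting family. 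For $\delta$ sufficiently small this iteration terminates in one or two steps and the total diameter in each factor remains independent of $n$.
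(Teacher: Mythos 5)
Your preparatory observations are the right ones (conjugation $t\mapsto\Ad{e^{i\pi tK}}{\cdot}$ and flat exponential interpolation $t\mapsto e^{i\pi(H_u+t(H_v-H_u))}$ through commuting hermitian logarithms both preserve unitarity and commutation, and the paper uses exactly these two path types), but your assembly compresses the four-segment concatenation of the proof of T.\ref{main_result} into two segments, and this compression creates the very cross-commutation obstruction you flag without resolving it. In T.\ref{main_result} the curved segment $\Ad{e^{-i\pi(1-t)\hat K}}{\tilde X_{\delta,j}}$ conjugates the $\delta$-PSRA $\tilde X_{\delta,j}$, all of which lie in the commutant of the single diagonal matrix $D$, while the affine segments move entirely within a fixed abelian algebra (the diagonal matrices, or their conjugate by $W^\ast$); pairwise commutation along the whole path is automatic at every instant. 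Your first segment $\Ad{e^{i\pi t\hat K}}{U_j}$ instead conjugates the original $U_j$ and lands at $\Ad{\hat W}{U_j}=WZU_jZ^\ast W^\ast$. Since $Z$ commutes with $D$ but not with $U_j$ or $\Lambda(U_j)$, this endpoint is not diagonal and need not commute with $V_k$ for $k\neq j$, so your second segment $e^{itG_j}\Ad{\hat W}{U_j}$ has no common abelian algebra to live in.

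Your proposed iterative repair does not close the gap: L.\ref{Joint_spectral_variation_inequality_2} applied to the close pair $(\Ad{\hat W}{\mathbf U},\mathbf V)$ returns a unitary $W'$ intertwining the tuples, but it does \emph{not} assert that $W'$ is $O(\delta)$-close to $\mathbf{1}_n$ — that is precisely the deficiency L.\ref{existence_of_almost_unit} exists to repair, and applying it again produces a new correction $Z'$ commuting with the new projective decomposition but again not with $U_j$, so the same obstruction reappears with no evident termination. The paper's intended modification avoids this entirely: keep the PSRA intermediate stops and the four segments, replacing only the three affine hermitian interpolations of T.\ref{main_result} by flat exponential interpolations through commuting hermitian logarithms (first inside $W^\ast(\text{diagonal unitaries})W$ from $U_j$ to $\hat U_{\delta,j}:=W^\ast\tilde U_{\delta,j}W$, then the same curved conjugation of $\tilde U_{\delta,j}$, then two flat exponential legs $\tilde U_{\delta,j}\rightsquigarrow\Lambda(U_j)\rightsquigarrow V_j$ inside the diagonal unitaries). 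Every segment then stays in, or is a unitary conjugate of, a single commutative unitary group, so cross-commutation never needs to be secured separately and the diameter bounds carry over verbatim from T.\ref{main_result}.
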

\begin{proof}
We have that for any two unitary matrices $U,V\in M_n$ such that $[U,V]=\mathbf{0}_n$ there are hermitian matrices $H_u,H_v$ such that 
the flat path $U(t):=e^{i\pi(H_u+t(H_v-H_u))}$ satisfies the interpolating conditions $U(0)=U$, $U(1)=V$, together with the constraints $[U(t),U]=[U(t),V]=\mathbf{0}_n$ and $U(t) U(t)^\ast=U(t)^\ast U(t)=\mathbf{1}_n$ for each $t\in[0,1]$. We also have that for any unitary $U$ and any hermitian contraction $K$ the curved path $V(t):=\Ad{e^{i\pi tK}}{U}$ preserves commutativity and is a unitary for each $t\in [0,1]$. By the previously described facts the result follows by modifying the proof of T.\ref{main_result} conveniently for any two $(\delta_\varepsilon,\eth)$-close $m$-tuples in $\mathbb{T}^m(n)$.
\end{proof}

\section{Hints and Future Directions}
\label{hints}
The implications of the main results in \S\ref{main_results} in the numerical solution of matrix equations on words will be further explored, some applications 
to the analysis of approximation/refinement procedures that can be implemented to solve matrix equations on words and eigenvalue problems in the sense of 
\cite{Spectral_refinement_1,Spectral_refinement_2,Randomized_Homotopy,Applied_Dilated_Spectrum_Clustering_1,
Johnson_matrix_words,Johnson_matrix_words_spectrum} will be the subject of further study as well.

The approximation and connectivity technology developed in this paper has a natural connection to approximate simultaneous diagonalization of $m$-tuples of matrices (in the sense of \cite{ASD_matrices}) and to normal matrix approximation of almost normal matrices in the sense of \cite{Greenbaum_matrix_dilations,Aspects_of_nonnormality}. The development of numerical algorithms to perform these tasks will be the subject of future communications.

The application and extension of the results in \S\ref{main_results} to the study of normal and near normal compressions of normal matrices (in the sense of \cite{Greenbaum_matrix_dilations,Holbrook_matrix_compressions} and \cite[\S9 and \S10]{Audenaert_Kittaneh_Open_Problems}) will be the subject of further study as well.

\section*{Acknowledgement}
I am very grateful with the Erwin Schr{\"o}dinger Institute for Mathematical Physics of the University of Vienna, for the outstanding hospitality and support during my visit to participate in the research program on Topological phases of quantum matter in 
August of 2014.  Much of the research reported in this document was carried out while I was visiting the Institute.

I am grateful with Terry Loring, Alexandru Chirvasitu, Moody Chu, Marc Rieffel, Stan Steinberg, Jorge Destephen and Concepci\'on Ferrufino, for several interesting questions and 
comments that have been very helpful for the preparation of this document.

\section*{References}


\begin{thebibliography}{00}
 \bibitem{Audenaert_Kittaneh_Open_Problems} {\sc K. M. R. Audenaert and F. Kittaneh.} {\em Problems and Conjectures in Matrix and Operator Inequalities.} arXiv:1201.5232v3 [math.FA] 2012.
 
  \bibitem{Spectral_refinement_1} {\sc M. Ahues, A. Largillier, F. Dias d'Almeida and P. B. Vasconcelos.} {\em 
 Spectral refinement on quasi-diagonal matrices.} Linear Algebra Appl. 401 (2005) 109–117.

 \bibitem{Spectral_refinement_2} {\sc M. Ahues, F. Dias d'Almeida, A. Largillier and P. B. Vasconcelos.} {\em 
 Spectral refinement for clustered eigenvalues of quasi-diagonal matrices.} Linear Algebra Appl. 413 (2006) 394–402.

 \bibitem{Randomized_Homotopy} {\sc D. Armentano and F. Cucker.} {\em A Randomized Homotopy for the Hermitian 
 Eigenpair Problem.} Found Comput Math (2015) 15:281-312.
 

 
  \bibitem{Bhatia_mat_book} {\sc R. Bhatia.} {\em Matrix Analysis.} Gaduate Texts in Mathematics 169. Springer-Verlag. 1997.
  
   \bibitem{applied_matrix_Homotopy_1} {\sc L. Chen, L. Han, and L. Zhou.} {\em Computing Tensor Eigenvalues via Homotopy 
 Methods.} SIAM J. Matrix Anal. Appl. Vol. 37, No. 1, pp. 290–319, 2016
 
 \bibitem{Applied_Dilated_Spectrum_Clustering_1} {\sc C. Davis, W. M. Kahan and H. F. 
 Weinberger.} {\em Norm-Preserving Dilations and Their Applications to Optimal 
 Error Bounds.} SIAM J. Numer. Anal. Vol. 19, No. 3, June 1982.

  \bibitem{Chu_num_lin} {\sc M. T. Chu.} {\em Linear Algebra Algorithms as Dynamical Systems.} Acta Numer. (2008), pp. 001-086. 2008.
 
 \bibitem{Matrix_Poly_Dennis} {\sc J. E. Dennis, J. F. Traub and R. P. Weber.} {\em The Algebraic Theory 
 of Matrix Polynomials.} SIAM J. Numer. Anal. Vol. 13, No. 6, December 1976.
 

  


 \bibitem{Rordam_Lin_Thm} {\sc P. Friis and M. R{\"o}rdam.} Almost commuting self-adjoint matrices - a short proof of Huaxin Lin's theorem. J. Reine Angew. 
 Math., 479:121–131, 1996.


 
  \bibitem{CLA_Freedman} {\sc M. H. Freedman and W. H. Press.} {\em Truncation of Wavelet Matrices: Edge Effects and the Reduction of 
 Topological Control} Linear Algebra Appl. 2:34:1-19 (1996)


 
 \bibitem{Greenbaum_matrix_dilations}{\sc A. Greenbaum, T. Caldwell and K. Li.} {\em Near Normal Dilations of Nonnormal Matrices and Linear Operators.} SIAM. J. Matrix Anal. Appl., Vol. 31, No. 4, pp. 1365-1381, 2016. 
 
 \bibitem{Johnson_matrix_words} {\sc C. J. Hillar and C. R. Johnson.} 
 {\em Symmetric Word Equations in two Positive Definite Letters.} Proceedings. Amer. Math. Soc., 
 Volume 132, Number 4, Pages 945-953. S 0002-9939(03)07163-6 2003.
 
 \bibitem{Holbrook_matrix_compressions} {\sc J. Holbrook, N. Mudalige and R. Pereira.} {\em Normal Matrix Compressions.} Oper. Matrices, Vol. 7, No. 4 (2013), 849-864. 
 
 \bibitem{Applied_Spectrum_Clustering_3} {\sc T.-M. Huang, W.-W. Lin and W. Wang.} {\em A hybrid Jacobi–Davidson method for interior cluster eigenvalues
with large null-space in three dimensional lossless Drude dispersive
metallic photonic crystals.} Comput. Phys. Comm. 207 (2016) 221-231.

\bibitem{Aspects_of_nonnormality} {\sc M. Huhtanen.} {\em Aspects of nonnormality for iterative methods.} Linear Algebra Appl. 394 (2005) 119-144.
 
 \bibitem{Johnson_matrix_words_spectrum} {\sc C. R. Johnson and C. Hillar.} {\em 
 Eigenvalues of  Words  in Two  Positive De nite Letters.} SIAM J. Matrix Anal. Appl., 23 (2002), 916-928. MR 2003e:81071
 


\bibitem{Applied_Dilated_Spectrum_Clustering_2} {\sc Y.-F. Ke and C.-F. Ma.} {\em 
Spectrum analysis of a more general augmentation block preconditioner for generalized 
saddle point matrices.} BIT Numer Math (2016) 56:489-500 DOI 10.1007/s10543-015-0570-0.

 \bibitem{Torus_Trick_Kirby} {\sc R. C. Kirby.} {\em Stable homeomorphisms and the annulus conjecture.} Ann. Math., Second Series, Vol. 89, 
 No. 3 (May, 1969), pp. 575-582
  
  \bibitem{Applied_Spectrum_Clustering_2} {\sc Z.-Z. Liang and G.-F. Zhang.} {\em Convergence behavior of generalized parameterized Uzawa method for singular saddle-point problems.} J. Comput. Appl. Math. 311 (2017) 293–305.
 

 
 \bibitem{Lin_Theorem} {\sc H. Lin.} {\em Almost Commuting Selfadjoint Matrices and Applications.} In Operator algebras and their applications (Waterloo, ON, 1994/1995), volume 13 of Fields Inst. Commun., pages 193-233. Amer. Math. Soc., Providence, RI, 1997.
 
 \bibitem{Vides_homotopies} {\sc T. A. Loring and F. Vides.} {\em Local Matrix Homotopies and Soft Tori.} arXiv:1605.06590 [math.OA]. 2016.
 
 \bibitem{sim_block_diag} {\sc T. Maehara and K. Murota.} {\em Algorithm for Error-Controlled Simultaneous Block-Diagonalization of Matrices.} SIAM J. Matrix Anal. Appl., 
 Vol. 32, No. 2, pp. 605-620, 2011.
 

 
  \bibitem{ASD_matrices} {\sc K. C. O'Meara and C. Vinsonhaler.} {\em On approximately simultaneously diagonalizable matrices.} Linear Algebra Appl. 412 (2006) 39-74.


 
 \bibitem{Pryde_Inequalities} {\sc A. J. Pryde.} {\em inequalities for the Joint Spectrum of Simultaneously Triangularizable Matrices.} 
 Proc. Centre Math. Appl., Mathematical Sciences Institute, The Australian National University (1992), 
 196-207.
 
 \bibitem{Finite_groups_Rieffel} {\sc M. A. Rieffel.} {\em Actions of Finite Groups on C*-Algebras.} Math. Scand. 47 (1980), 157-176. 1980.
 
 \bibitem{Vector_bundles_Rieffel} {\sc M. A. Rieffel.} {\em Vector Bundles and Gromov-Hausdorff Distance.} 
 J. K-theory 5(2010), 39-103.
 
 \bibitem{Clustered_matrix_approximation} {\sc B. Savas and I. S. Dhillon.} {Clustered Matrix Approximation.} SIAM J. Matrix Anal. Appl. Vol. 37, No. 4, pp. 1531-1555, 2016.
 

  \bibitem{Vides_matrix_words} {\sc F. Vides.} {\em Local Deformation of Matrix Words.} arXiv:1608.08562 [math.OA]

\end{thebibliography}
\end{document}